\newtheorem{theorem}{Theorem}[section]
\newtheorem{corollary}[theorem]{Corollary}
\newtheorem{proposition}[theorem]{Proposition}
\theoremstyle{definition}
\newtheorem{definition}[theorem]{Definition}
\theoremstyle{remark}
\numberwithin{equation}{section}
\begin{document}
\def\Xint#1{\mathchoice
  {\XXint\displaystyle\textstyle{#1}}%
  {\XXint\textstyle\scriptstyle{#1}}%
  {\XXint\scriptstyle\scriptscriptstyle{#1}}%
  {\XXint\scriptscriptstyle\scriptscriptstyle{#1}}%
  \!\int}
\def\XXint#1#2#3{{\setbox0=\hbox{$#1{#2#3}{\int}$}
    \vcenter{\hbox{$#2#3$}}\kern-.5\wd0}}
\def\ddashint{\Xint=}
\def\avgint{\Xint-}

\title[Optimality without examples] { Optimal exponents in weighted estimates without examples }

\author{Teresa Luque}
\address{Departamento de An\'alisis Matem\'atico,
Facultad de Matem\'aticas, Universidad de Sevilla, 41080 Sevilla,
Spain} 
\email{tluquem@us.es}

\author{Carlos P\'erez}
\address{Departamento de An\'alisis Matem\'atico,
Facultad de Matem\'aticas, I.M.U.S., Instituto de Matem\'aticas de la Universidad de Sevilla, Universidad de Sevilla 41080 Sevilla,
Spain} 
\email{carlosperez@us.es}

\author{Ezequiel Rela}
\address{Departamento de An\'alisis Matem\'atico,
Facultad de Matem\'aticas, Universidad de Sevilla, 41080 Sevilla,
Spain} 
\email{erela@us.es}

\thanks{The first author is supported by the Spanish Ministry of Science and Innovation grant MTM2012-30748,
the second and third authors are also supported by the Junta de Andaluc\'ia, grant FQM-4745.}

\subjclass{Primary: 42B25. Secondary: 43A85.}

\keywords{Muckenhoupt  weights, Calder\'on-Zygmund operators, Maximal functions}

\begin{abstract} 
We present a general approach for proving the optimality  of  the exponents on weighted estimates. We show that if an operator $T$ satisfies a bound like 
\begin{equation*}
\|T\|_{L^{p}(w)}\le c\, [w]^{\beta}_{A_p} \qquad w \in A_{ p},
\end{equation*}
then the  optimal lower bound  for $\beta$ is closely related to the  asymptotic behaviour of the unweighted $L^p$ norm $\|T\|_{L^p(\mathbb{R}^n)}$  as $p$ goes to $1$ and $+\infty$, which is related to Yano's classical extrapolation theorem. 
By combining these results with the known weighted inequalities, we derive the sharpness of the exponents, without building any specific example, for a wide class of operators including maximal-type, Calder\'on--Zygmund 
and fractional operators. In particular, we obtain a lower bound for the best possible exponent for Bochner-Riesz multipliers. 
We also present a new result concerning a continuum family of maximal operators on the scale of logarithmic Orlicz functions.
Further, our method allows to consider in a unified way maximal operators defined over very general Muckenhoupt bases.

\end{abstract}

\maketitle

\section{Introduction and statement of the main result}

\subsection{Introduction}
A main problem in modern Harmonic Analysis is the study of sharp norm inequalities for some of the classical operators on weighted Lebesgue spaces $L^p(w), \, 1<p<\infty$.  The usual examples include the Hardy--Littlewood maximal operator, the Hilbert transform and more general Calder\'on-Zygmund operators (C--Z operators). Here $w$ denotes a non--negative, locally integrable function, that is a weight. The class of weights for which these operators $T$ are bounded on $L^p(w)$ were identified in \cite{Muckenhoupt:Ap} and in the later works \cite{HMW}, \cite{CF}. This class consists of the Muckenhoupt $A_{p}$ weights defined by the condition
\begin{equation*}
[w]_{A_p}:=\sup_{Q}\left(\frac{1}{|Q|}\int_{Q}w(y)\ dy \right)\left(\frac{1}{|Q|}\int_{Q}w(y)^{1-p'}\ dy \right)^{p-1}<\infty, 
\end{equation*}
where the supremum is taken over all the cubes $Q$ in $\mathbb{R}^n$, $1<p<\infty$ and as usual $p'$ stands for the dual exponent of $p$ satisfying $1/p+1/p'=1$. 

Given any of these operators $T$, the first part of this problem is to look for quantitative bounds of the norm $\|T\|_{L^p(w)}$ in terms of  the $A_p$ constant of the weight. Then, the following step is to find the sharp dependence, typically with respect to the power of $[w]_{A_p}$. In recent years, the answer to this last question has let a fruitful activity and development of new tools in Harmonic Analysis. 

The first classical example is the case of the Hardy--Littlewood maximal function defined as 
\begin{equation*}
 Mf(x)=\sup_{x\in Q}\avgint_Q |f(y)| \ dy,
\end{equation*}
where the supremum is taken over all cubes containing the point $x$ and with sides parallel to the coordinate axes. As usual, we denote by $\avgint_A f$ the average of the function $f$ over the set $A$. It is well known that if $M$ is the maximal function, then
\begin{equation}\label{eq:buckley}
\|M\|_{L^{ p }(w)} \le c \, [w]^{   \frac{1}{p-1}  }_{A_{ p}}, \qquad w \in A_{ p},
\end{equation}
 and the exponent is sharp, namely $\frac{1}{p-1}$ cannot be replaced with $\frac{1-\varepsilon}{p-1}$, $\varepsilon>0$. This is due to Buckley \cite{Buckley}.

Similarly 
S. Petermichl showed in \cite{Petermichl:Riesz} that 
\begin{equation}\label{eq:Petermichl}
\|T\|_{L^{ p }(w)} \le c \, [w]^{ \max\{1,   \frac{1}{p-1} \}}_{A_{ p}}, \qquad w \in A_{ p}
\end{equation}
is sharp when $T$ is any Riesz transform. In each of these papers, the optimality of the exponent is  shown by exhibiting specific examples adapted to the operator under analysis. In the case of Riesz transform, the examples are specific  for the range $1<p<2$ and then, the sharpness for large $p$ is obtained by duality.

Similar weighted estimates are known to be true for other classical operators, such as commutators $[b,T]$ of C--Z operators and BMO functions, the dyadic square function $S_d$, vector valued maximal operators $\overline{M}_q$ for $1\le p,q\le \infty$, Bochner-Riesz multipliers $B^\lambda$ and fractional integrals $I_\alpha$. In the case of sharp bounds with respect to the power of the $A_p$ constant of the weight $w$, the sharpness is always proved by constructing specific examples for each operator. 

\subsection{Main results}
The main purpose of this article is to present a new approach to test sharpness of weighted estimates. We provide  a very general scheme that can be applied to most of the classical operators in Harmonic Analysis. In particular, we show that there is no need to build such examples and that the sharpness is intimately related to the unweighted $L^p$ norm behaviour of the operator $T$ as $p$ gets close to the endpoint $p=1$ and $p=\infty$.  The key ingredient is an application of the so called Rubio de Francia's iteration algorithm. This is a basic but powerful technique that was fruitful since it was first applied to factorization of weights and extrapolation. In particular, we will be using some ideas from the new proof of the extrapolation theorem from \cite{Javi-Duo-JFA} and also from \cite{CMP-Book}.

To illustrate the aim of the next definition, consider the following example.  Let $H$ be  the Hilbert transform.  Then, it is known that the size of its kernel implies that the unweighted $L^p$ norm satisfies
\begin{equation}\label{eq:endpointH}
 \|H\|_{L^p(\mathbb{R}^n)}\sim O(\frac{1}{p-1}).
\end{equation}
This condition is a particular case of the classical Yano condition related to the well known Yano's extrapolation theorem as shown in \cite{Yano} (see also \cite[p. 61, Theorem 3.5.1]{Guzman-real} for more details and \cite{Carro-JFA} for a generalization of these ideas). In particular, the above condition allows to prove endpoint boundedness properties for the operator in appropriate $L\log L$ spaces at local level. However, the relevant feature for our purpose is that the operator norm \emph{blows up} with order $1$ and no less. 
Influenced by this condition  we give a precise definition which tries to capture this \emph{endpoint order} by looking at the asymptotic behaviour of the $L^p$ norm of a general operator $T$.

\begin{definition}\label{def:orders}
Given a bounded operator $T$ on $L^p(\mathbb{R}^n)$ for $1<p<\infty$, we define $\alpha_T$ to be the ``endpoint order" of $T$ as follows:
\begin{equation}\label{eq:endpoint1}
\alpha_T=:\sup\{\alpha\ge 0: \forall \varepsilon>0, \limsup_{p \to 1 }  (p-1)^{\alpha-\varepsilon} \|T\|_{L^p(\mathbb{R}^n)} =\infty\}.
\end{equation}
The analogue of \eqref{eq:endpoint1} for $p$ large is the following. Let $\gamma_T$ be defined as follows
\begin{equation}\label{eq:endpointINF}
\gamma_T=:\sup\{\gamma\ge 0: \forall \varepsilon>0, \limsup_{p \to \infty }  \,\frac{\|T\|_{L^p(\mathbb{R}^n)}}{p^{\gamma-\varepsilon}} =\infty\}.
\end{equation}
\end{definition}
This definition may have appeared previously in the literature but we are not aware of it.

Now we can state our main result.

\begin{theorem} \label{thm:AbstractBuckley}
Let $T$ be an operator (not necessarily linear). Suppose further that 
for some $1<p_0<\infty$ and for any $w \in A_{ p_0}$
\begin{equation}\label{eq:weighted}
\|T\|_{L^{ p_{0} }(w)} \le c \, [w]^{\beta}_{A_{ p_0}}. 
\end{equation}
Then  $\beta\ge \max\left \{\gamma_T;\frac{\alpha_T}{p_0-1}\right \}$.
\end{theorem}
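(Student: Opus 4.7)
The plan is to convert hypothesis \eqref{eq:weighted}, which at the trivial choice $w=1$ gives no information about the $p$-dependence, into genuine unweighted $L^p$ bounds for $T$ at exponents $p$ approaching $1$ and $\infty$, with quantitative control of the operator norm. These asymptotics, once obtained, can be compared directly with Definition~\ref{def:orders} to force the claimed lower bounds on $\beta$. The engine throughout is the Rubio de Francia algorithm in its quantitative form.

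For $\beta\ge\gamma_T$, I fix $p>p_0$ and a nonnegative test function $h$ with $\|h\|_{L^{(p/p_0)'}(\mathbb{R}^n)}=1$. The Rubio de Francia algorithm
\begin{equation*}
Rh:=\sum_{k\ge 0}\frac{M^k h}{\bigl(2\|M\|_{L^{(p/p_0)'}(\mathbb{R}^n)}\bigr)^k}
\end{equation*}
defines an $A_1$ weight dominating $h$, with $[Rh]_{A_1}\le 2\|M\|_{L^{(p/p_0)'}(\mathbb{R}^n)}$ and $\|Rh\|_{L^{(p/p_0)'}}\le 2$. Combining $[Rh]_{A_{p_0}}\le[Rh]_{A_1}$, hypothesis \eqref{eq:weighted} applied with $w=Rh$, and H\"older's inequality with conjugate exponents $p/p_0$ and $(p/p_0)'$, one arrives at $\int|Tf|^{p_0}\,h\lesssim\|M\|_{L^{(p/p_0)'}}^{\beta p_0}\|f\|_{L^p}^{p_0}$. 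Taking the supremum over $h$ and invoking the classical estimate $\|M\|_{L^s(\mathbb{R}^n)}\lesssim s/(s-1)$, one concludes
\begin{equation*}
\|T\|_{L^p(\mathbb{R}^n)}\lesssim\|M\|_{L^{(p/p_0)'}(\mathbb{R}^n)}^{\beta}\lesssim p^{\beta}\qquad (p\to\infty).
\end{equation*}
Any $\gamma>\beta$ then renders the \emph{limsup} in \eqref{eq:endpointINF} equal to zero for every sufficiently small $\varepsilon>0$, contradicting the definition of $\gamma_T$; hence $\gamma_T\le\beta$.

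For $\beta\ge\alpha_T/(p_0-1)$ the analogous unweighted asymptotic is needed as $p\to 1^+$. When $T$ is linear, the cleanest route is dualization: since $[w^{1-p_0'}]_{A_{p_0'}}=[w]_{A_{p_0}}^{1/(p_0-1)}$, hypothesis \eqref{eq:weighted} transfers to $\|T^*\|_{L^{p_0'}(\sigma)}\lesssim[\sigma]_{A_{p_0'}}^{\beta(p_0-1)}$ for $\sigma\in A_{p_0'}$; applying the previous argument to $T^*$ (now at exponent $p_0'$ with power $\beta(p_0-1)$) and dualizing back yields $\|T\|_{L^p(\mathbb{R}^n)}\lesssim (p-1)^{-\beta(p_0-1)}$ as $p\to 1^+$. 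For general, not necessarily linear $T$, the same asymptotic follows from the downward half of sharp Rubio de Francia extrapolation, whose proof runs the algorithm on the dual weight side and produces identical constants. Comparing with \eqref{eq:endpoint1} one deduces $\alpha_T\le\beta(p_0-1)$, that is $\beta\ge\alpha_T/(p_0-1)$. The main delicate point is the precise tracking of the $p$-dependence of the extrapolation constants, since every bit of sharp asymptotic information ultimately comes from $\|M\|_{L^s(\mathbb{R}^n)}$ as $s\to 1^+$; in the downward step for nonlinear $T$ one must rely on the abstract pair-based formulation of extrapolation rather than on a bare duality shortcut.
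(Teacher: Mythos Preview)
Your argument is correct. For the bound $\beta\ge\gamma_T$ it is a close variant of the paper's: both dualize and run Rubio de Francia, but you pair $|Tf|^{p_0}$ against $h\in L^{(p/p_0)'}$ and take $w=Rh\in A_1$ directly, while the paper pairs $|Tf|$ against $h\in L^{p'}$ and uses the fractional-power weight $w=(R'h)^{(p-p_0)/(p-1)}$ followed by Jensen's inequality. Your setup is slightly cleaner in that it avoids the Jensen step; the resulting asymptotic $\|T\|_{L^p}\lesssim p^{\beta}$ is the same.

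The genuine difference is in the $\alpha_T$ half. You split into the linear case (dualize to $T^*$, reuse the upward argument, dualize back) and the general case (cite sharp downward extrapolation as a black box). The paper instead gives one direct argument that works uniformly for nonlinear $T$ without any adjoint and without appealing to an external extrapolation theorem: for $1<p<p_0$ one runs the Rubio de Francia algorithm on the function $f$ itself in $L^p(\mathbb{R}^n)$, sets $w:=(Rf)^{-(p_0-p)}$, and uses $f\le Rf$ to get $|f|^{p_0}w\le |f|^p$, while $w^{1-p_0'}=(Rf)^{(p_0-p)/(p_0-1)}$ has exponent in $(0,1)$ so that Jensen and property~(C) control $[w]_{A_{p_0}}$ by $\|M\|_{L^p}^{p_0-p}$. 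A H\"older split then yields $\|T\|_{L^p}\le c\,\|M\|_{L^p}^{\beta(p_0-p)}$. This is precisely the ``downward extrapolation'' you invoke, made explicit; the paper even remarks that it is equivalent to the result you cite. What the explicit version buys is self-containment and a proof that visibly needs no linearity or pair-based reformulation, whereas your route leans on either $T^*$ or an outside reference at the one place the theorem statement cares about (nonlinear $T$).
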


The novelty here is that we can exhibit a close connection between the weighted estimate and the unweighted behaviour of the operator at the endpoints $p=1$ and $p=\infty$. This result can be applied to the known inequalities \eqref{eq:buckley}  and \eqref{eq:Petermichl}  to derive the sharpness without building any particular example for each operator. 

In addition, we can observe that this is a sort of template suitable for any operator. Indeed, as an application we can derive the optimal exponent that one could expect in a weighted estimate for a maximal operator associated to a generic Muckenhoupt basis. 
Note that in this latter case it is not even possible to have an example working for a general basis. However, our method allow us to avoid the use of examples and deal with all the bases at once. 

We also obtain new results for a class of maximal functions defined in terms of Orlicz averages. For $\Phi_\lambda(t)=t\log(e+t)^\lambda$, $\lambda \in [0,\infty)$, we prove new weighted estimates for the Orlicz maximal operator $M_{\Phi_\lambda}$ which, in addition, are sharp 
as a consequence of Theorem \ref{thm:AbstractBuckley}. We remit to Section \ref{sec:orlicz} for the precise definitions of these operators 
which can be seen as continuous versions of the iterated Hardy-Littlewood maximal functions.  This  continuity is reflected in the exponent of the weighted estimates proved in Theorem \ref{thm:Orlicz}.  The operators $M_{\Phi_\lambda}$ are special cases of more general Orlicz maximal operators $M_{\Phi}$ introduced in \cite{perez95} to study sharp sufficient ``bump" type conditions for the so called two-weight problem for the Hardy-Littlewood maximal operator. Similar conditions were also considered in the two weight context for fractional integrals in \cite{perez94:indiana} and very recently in the context of Calder\'on--Zygmund operators \cite{Lerner:simpleA2} where it is used to solve the so called ``bump conjecture''.  The special case of $M_{\Phi_\lambda}$ was used in \cite{perez94:london}  to derive very sharp two weight estimate of the form $(w,M_{\Phi_\varepsilon}(w))$, 
\begin{equation*}
\|Tf\|_{L^{1,\infty}(w)}\leq  c_{\varepsilon,T}
\int_{\mathbb{R}^n} |f(x)|\,M_{L(\log L)^{\epsilon}} (w)(x)\,dx,
\quad w\geq 0.
\end{equation*}
where $T$ is any Calder\'on--Zygmund operator and where $\varepsilon>0$  is arbitrarily small. Similar sharp estimates  where also obtained in  the case $p>1$.

Even in the case where it is not known a sharp weighted estimate, we obtain a lower bound for the exponent of the $A_p$ constant. This is the case of Bochner-Riesz multipliers treated in Section \ref{sec:CZ}, Corollary \ref{cor:BR}.

\subsection{Outline}
This article is organized as follows. In Section \ref{sec:proofs} we prove the main result. Then, in Section \ref{sec:CZ} we show how to derive the sharpness of some known weighted estimates for Calder\'on--Zygmund operators with large kernels. We also exhibit lower bounds for the optimal exponent in the case of Bochner--Riesz multipliers. In Section \ref{sec:maximal-and-square} we study maximal type operators and dyadic square functions. In Section \ref{sec:fractional} we obtain results for fractional integral operators by using similar ideas and off-diagonal extrapolation techniques. Finally, in Section \ref{sec:muckenhoupt-bases} our method is used to obtain optimal exponents in the case of maximal functions defined over general Muckenhoupt bases.

\section{Proof of Theorem \ref{thm:AbstractBuckley} }\label{sec:proofs}

We present here the proof of the main results. The key tool is the Rubio de Francia's iteration scheme or algorithm to produce $A_1$ weights with a precise control of the constant  of the weight and the main underlying idea comes from extrapolation theory. 
The same ideas that we use here were already used to prove sharp weighted estimates for the Hilbert transform with $A_1$ weights in \cite{Fefferman-Pipher}. A more precise and general version was obtained recently in  \cite{Javi-Duo-JFA}. We remark that the first part of the proof, namely the proof of inequality \eqref{eq:CF} below, is a consequence of the extrapolation result from \cite{Javi-Duo-JFA} (see Theorem 3.1, first inequality of (3.2), p. 1889). We choose to include the proof for the sake of completeness. For our inequality \eqref{eq:CF-dual}, which is the analogue for large $p$, we perform a slightly different proof.

\begin{proof}[Proof of Theorem \ref{thm:AbstractBuckley}] 
We first consider the bound $\beta\ge\frac{\alpha_T}{p_0-1}$. The first step is to prove the following inequality, which can be seen as an unweighted Coifman-Fefferman type inequality relating the operator $T$ to the Hardy--Littlewood maximal function. We have that
\begin{equation} \label{eq:CF}
\|T\|_{L^{p}(\mathbb{R}^n) } \leq c\,   \|M\|_{L^{p}(\mathbb{R}^n) }   ^{\beta(p_0-p)}   \qquad  1<p<p_0.
\end{equation}

Lets start by defining, for $1<p<p_0$,  the operator $R$ as follows: 
\begin{equation*}
R(h)= \sum_{k=0}^\infty \frac1{2^k}\frac{M^k
(h)}{\|M\|_{L^{p}(\mathbb{R}^n)}^k}
\end{equation*} 
Then we have

(A) \quad $h\le R(h)$

\vspace{.2cm}

(B) \quad $\|R(h)\|_{L^{p}(\mathbb{R}^n)}\le
2\,\|h\|_{L^{p}(\mathbb{R}^n)}$

\vspace{.2cm}

(C) \quad  $[R(h)]_{A_{1}}\leq 2\,   \|M\|_{L^{p}(\mathbb{R}^n) }$

\

To verify \eqref{eq:CF}, consider $1<p<p_0$ and apply Holder's inequality to obtain
\begin{eqnarray*}
\|T(f)\|_{L^{p}(\mathbb{R}^n)} & = &  \Big(  \int_{\mathbb{R}^n} |Tf|^{p}\,  (Rf)^{-(p_{0}-p) \frac{p}{p_0}}\,(Rf)^{(p_{0}-p) \frac{p}{p_0}}\,dx  \Big)^{1/p}\\
& \le & \Big(  \int_{\mathbb{R}^n} |Tf|^{p_{0}}\,  (Rf)^{-(p_{0}-p) }\,dx  \Big)^{1/p_{0}}\, 
\Big(  \int_{\mathbb{R}^n} (Rf)^{p}\,dx  \Big)^{\frac{p_{0}-p}{pp_{0}}}\\
\end{eqnarray*}
For clarity in the exposition, we denote $w:=(Rf)^{-(p_{0}-p)}$. Then, by the key  hypothesis \eqref{eq:weighted} together with properties $(A)$ and $(B)$ of the Rubio de Francia's algorithm, we have that
\begin{eqnarray*}
\|T(f)\|_{L^{p}(\mathbb{R}^n)} &  \le & c\, [w]_{A_{p_{0}}}^{\beta}\, \Big(  \int_{\mathbb{R}^n} |f|^{p_{0}}\,  w\,dx  \Big)^{1/p_{0}}\|f\|_{L^{p}(\mathbb{R}^n) }^{\frac{p_{0}-p}{p_{0}}}\\
&\leq & c\,  [w]_{A_{p_{0}}}^{\beta}\, \Big(  \int_{\mathbb{R}^n} |f|^{p}\,  dx  \Big)^{1/p_{0}}
\|f\|_{L^{p}(\mathbb{R}^n) }^{1- \frac{p}{p_{0}}}\\
& = &  c\, [w]_{A_{p_{0}}}^{  \beta}\,
\|f\|_{L^{p}(\mathbb{R}^n) }\\
& = &   c\, [w^{1-p_0'}]_{A_{p'_{0}}}^{\beta(p_0-1)}
\|f\|_{L^{p}(\mathbb{R}^n) }  
\end{eqnarray*}
since $[w]_{A_q}= [w^{1-q'}]^{q-1}_{A_{q'}}$.  Now, since \, $\frac{p_0-p}{p_0-1}<1$ we can use Jensen's inequality to compute the constant of the weight as follows
\begin{equation*}
 [w^{1-p_0'}]_{A_{p'_{0}}}=[(Rf)^\frac{p_0-p}{p_0-1}]_{A_{p'_{0}}}
\le [R(f)]_{A_{p'_{0}}}^{\frac{p_0-p}{p_0-1}  }
\le [R(f)]_{ A_{ 1} }^{\frac{p_0-p}{p_0-1}  }
\end{equation*} 
Finally, by making use of property (C), we conclude that  
\begin{equation*}
 \|T(f)\|_{L^{p}(\mathbb{R}^n)} \le c\,   \|M\|_{L^{p}(\mathbb{R}^n) }   ^{   \beta (p_0-p) }\,
\|f\|_{L^{p}(\mathbb{R}^n) }, 
\end{equation*}
which clearly implies \eqref{eq:CF}.
Once we have proved the key inequality \eqref{eq:CF}, we can relate the exponent on the weighted estimate to the endpoint order of $T$. To that end, we will use the known asymptotic behaviour of the unweighted $L^p$ norm of the maximal function. 
It is well known that when $p$ is close to $1$, there is a dimensional constant $c$ such that
\begin{equation}\label{eq:maximal-pto1}
\| M \|_{L^{p}(\mathbb{R}^n)} \leq c\,
\frac{1}{p-1}.
\end{equation}
Then, for $p$ close to 1, we obtain
\begin{equation}
\|T\|_{L^{p}(\mathbb{R}^n) }  \le   c\, (p-1)^{-\beta (p_0-p)} \le c\, (p-1)^{-\beta (p_0-1)} 
\end{equation}
Therefore, multiplying by  $(p-1)^{\alpha_T-\varepsilon}$, using the definition of $\alpha_T$  and taking upper limits we have,
\begin{equation*}
+\infty=\limsup_{p\to1}\, (p-1)^{\alpha_T-\varepsilon}\|T\|_{L^{p}(\mathbb{R}^n) }\le  c\,\limsup_{p\to1}\, (p-1)^{\alpha_T-\varepsilon-\beta(p_0-1)}. 
\end{equation*}
This last inequality implies that $\beta\ge \frac{\alpha_T}{p_0-1}$, so we conclude the first part of the proof of the theorem. 

For the proof of the other inequality, $\beta\ge\gamma_T$, we follow the same line of ideas, but with a twist  
involving the dual space $L^{p'}(\mathbb{R}^n)$. Fix $p$, $p>p_0$. We perform the iteration technique $R'$ as before  changing $p$ with $p'$: 
\begin{equation*}
R'(h)= \sum_{k=0}^\infty \frac1{2^k}\frac{M^k
(h)}{\|M\|_{L^{p'}(\mathbb{R}^n)}^k}
\end{equation*}
Then we have

(A') \quad $h\le R'(h)$

\vspace{.2cm}

(B') \quad $\|R'(h)\|_{L^{p'}(\mathbb{R}^n)}\le
2\,\|h\|_{L^{p'}(\mathbb{R}^n)}$

\vspace{.2cm}

(C') \quad  $[R'(h)]_{A_{1}}\leq 2\,   \|M\|_{L^{p'}(\mathbb{R}^n) }$

\vspace{.2cm}

Fix $f\in L^p(\mathbb{R}^n)$. By duality there exists a non-negative
function $h\in L^{p'}(\mathbb{R}^n)$, $\|h\|_{L^{p'}(\mathbb{R}^n)}=1$, such that,
\begin{eqnarray*}
\|Tf\|_{L^p(\mathbb{R}^n)} & = & \int_{\mathbb{R}^n} |Tf(x)| h(x)\,dx\\
& \le &\int_{\mathbb{R}^n} |Tf| (R' h)^{  \frac{p-p_0}{p_0(p-1)}   } \,h^{ \frac{p(p_0-1)}{ p_0(p-1) }  } \,dx\\
& \le &\left(\int_{\mathbb{R}^n} |Tf|^{p_0} (R' h)^{  \frac{p-p_0}{p-1}   }\,dx\right)^{1/p_0} \left(\int_{\mathbb{R}^n}  h^{p'}\,dx\right)^{1/p_0'} \\
& = &   \left(\int_{\mathbb{R}^n} |Tf|^{p_0} (R' h)^{  \frac{p-p_0}{p-1}   }\,dx\right)^{1/p_0}. 
\end{eqnarray*}
Now we use the key hypothesis \eqref{eq:weighted} and H\"older's inequality to obtain
\begin{eqnarray*}
\|Tf\|_{L^p(\mathbb{R}^n)} & \le & c\, [(R' h)^{  \frac{p-p_0}{p-1}   } ]_{A_{p_0}} ^{\beta}
 \left(\int_{\mathbb{R}^n} |f|^{p_0} (R' h)^{  \frac{p-p_0}{p-1}   }\, dx\right)^{1/p_0}  \\
& \le & c\, [(R' h)^{  \frac{p-p_0}{p-1}   } ]_{A_{p_0}} ^{\beta}
  \left(\int_{\mathbb{R}^n} |f|^{p} dx\right)^{1/p}   \left(\int_{\mathbb{R}^n} (R' h)^{ p'   }\,dx\right)^{\frac{1}{p'}\frac{p-p_0}{ p_0(p-1) }   } \\
& \le & c\, [(R' h)^{  \frac{p-p_0}{p-1}   } ]_{A_{p_0}} ^{\beta}
  \left(\int_{\mathbb{R}^n} |f|^{p} dx\right)^{1/p}   \qquad \mbox{by (B') }.\\
& \le & c\, [R' h ]_{A_{p_0}} ^{\beta  \frac{p-p_0}{p-1}   }
  \left(\int_{\mathbb{R}^n} |f|^{p} dx\right)^{1/p}   \qquad \mbox{by Jensen's }\\
& \le & c\, \|M\|_{ L^{p'}(\mathbb{R}^n) }^{\beta \frac{p-p_0}{p-1}}
  \left(\int_{\mathbb{R}^n} |f|^{p} dx\right)^{1/p}   \quad \mbox{by (C')}.
\end{eqnarray*}
Hence,
\begin{equation}\label{eq:CF-dual}
\|T\|_{L^{p}(\mathbb{R}^n) } \leq c\,   \|M\|_{L^{p'}(\mathbb{R}^n) }^{\beta \frac{p-p_0}{p-1}}  \qquad  p>p_0.
\end{equation}
This estimate is similar and, somehow dual, to \eqref{eq:CF}. To finish the proof we recall that, for large $p$, namely $p>p_1>p_0$,  we have the asymptotic estimate, $\| M \|_{p'} \approx  \frac{1}{p'-1}\le p$. Therefore, we have that 
\begin{equation*}
\|T\|_{L^{p}(\mathbb{R}^n) } \le c\, p ^{\beta\frac{p-p_0}{p-1}}\le c\, p^\beta 
\end{equation*}
since $p>p_1>p_0>1$. As before, dividing by  $p^{\gamma_T-\varepsilon}$ and taking upper limits, we obtain
\begin{equation*}
+\infty=\limsup_{p\to \infty}\, \frac{\|T\|_{L^{p}(\mathbb{R}^n) }}{p^{\gamma_T-\varepsilon}}\le c\, \limsup_{p\to  \infty} \, p^{\beta-\gamma_T+\varepsilon}.
\end{equation*}
This last inequality implies that $\beta\ge \gamma_T$, so we conclude the proof of the theorem. 

\end{proof}

\subsection{Two remarks on sharpening the sharp bounds}\label{sec:sharper}

In the previous section, we showed how to prove sharp weighted bound avoiding the use of specific examples. We studied sharpness with respect to the power of the $A_p$  constant of the weight. 
However, there are several further improvements that can be made. First, we will consider mixed $A_p-A_\infty$ bounds in the spirit of \cite{HP} (see also \cite{HPR1}, and \cite{LM}). 
We also show refined estimates beyond the scale of power functions. 

\subsubsection{Mixed bounds} \label{subsec:mixed}
Here we address the problem of finding sharp ``mixed bounds". More precisely, it was shown in \cite{HP} that the maximal function satisfies
\begin{equation}\label{eq:buckmixed}
\|M\|_{L^{ p }(w)} \le c \, [w] ^{   \frac{1}{p}  }_{A_{ p }}  [\sigma] ^{   \frac{1}{p}  }_{A_{ \infty }} \qquad w \in A_{ p}.
\end{equation}
where $\sigma=\sigma_p=w^{1-p'}$ and where 
\begin{equation*}
 [\sigma]_{A_{\infty}}:= \sup_{Q}\frac{1}{\sigma(Q)}\int_Q M(\chi_Q \sigma)dx
\end{equation*}
is the Fujii-Wilson $A_{\infty}$'s constant which is much smaller than the usual (Hrushev) $A_{\infty}$ constant defined in terms of the exponential average. 
Estimate \eqref{eq:buckmixed} was proved in \cite{HP} and it was used  to improve the $A_2$ theorem from \cite{Hytonen:A2}. A better argument for proving \eqref{eq:buckmixed} was obtained in \cite{HPR1}. In addition, let us remark that in \cite{PR-twoweight} there is a new proof of this result which avoids completely the use of the delicate reverse H\"older property of $A_\infty$ weights.

We have the following corollary of Theorem \ref{thm:AbstractBuckley}.

\begin{corollary} \label{cor:AbstractBuckley-mixed}
Let $T$ be an operator (not necessarily linear). Let $w \in A_{p_0}$ for some $1<p_0<\infty$, and recall that $\sigma=w^{1-p_0'}$. Suppose further that 
\begin{equation}\label{eq:weighted-mixed}
\|T\|_{L^{ p_{0} }(w)} 
\le c\, [w] ^{\beta_1}_{A_{ p_0 }}  [\sigma] ^{\beta_2}_{A_{ \infty }}. 
\end{equation}
Then  $\beta_1+\frac{\beta_2}{p_0-1}\ge \max\left \{\gamma_T;\frac{\alpha_T}{p_0-1}\right \}$.
\end{corollary}
\begin{proof}
The proof of this variant reduces to a simple observation based on the duality properties of Muckenhoupt weights. More precisely, for any $A_p$ weight and any pair of positive exponents $\beta_1$ and $\beta_2$, we have that
\begin{equation*}
[w]^{\beta_1}_{A_p}[w^{1-p'}]^{\beta_2}_{A_\infty}\le
[w]^{\beta_1}_{A_p}[w^{1-p'}]^{\beta_2}_{A_{p'}}=
[w]^{\beta_1}_{A_p}[w]^{\frac{\beta_2}{p-1}}_{A_p}=
[w]^{\beta_1+\frac{\beta_2}{p-1}}_{A_p}.
\end{equation*}
By Theorem \ref{thm:AbstractBuckley}, we conclude that $\beta_1+\frac{\beta_2}{p_0-1}\ge \max\left \{\gamma_T;\frac{\alpha_T}{p_0-1}\right \}$.
\end{proof}

Note that this result implies that we cannot consider in \eqref{eq:buckmixed} smaller exponents. 
The same argument can be used to show  the sharpness of mixed bounds for C--Z operators. For a given C--Z operator $T$ satisfying some size condition on the kernel (see condition \eqref{eq:kernel} below) we have that $\alpha_T=1$. Therefore, any pair of exponents $(\beta_1,\beta_2)$ lying on the \emph{sharpness line} defined by $\beta_1(p-1)+\beta_2=1$ is sharp. We cannot replace any of them with a smaller quantity. For example, for any $T$ it is proved in \cite{HP} that
\begin{equation}\label{eq:mixedCZ1}
 \|T\|_{L^p(w)}\le c\, [w]_{A_p}^{2/p}[\sigma]_{A_\infty}^{2/p-1}
\end{equation}
for any $p\in (1,2]$ and any $w\in A_p$. We conclude that this pair of exponents is sharp although this does not mean that it is the best possible result. Indeed, by moving along the sharpness line, we can balance the exponents replacing some power of $[w]_{A_p}$ by  the corresponding power of $[w]_{A_\infty}$. Clearly, the best bounds are those involving a larger power in the $A_\infty$ fraction of the weight.

In the case of commutators, we have from \cite{HP} that, for $1<p\le 2$,
\begin{equation*}
 \|[b,T]\|_{L^p(w)}\le c\, [w]_{A_p}^{4/p}[\sigma]_{A_\infty}^{4/p-2}.
\end{equation*}
The exponent of this result is also sharp because the commutator satisfies that $\alpha_{[b,T]}=2$.

\subsection{Beyond power functions}

We start this section by recalling that from Buckley's original example one can conclude that inequality \eqref{eq:buckley} is sharp for \emph{arbitrary} perturbations.  Our method also allows to conclude 
the same perturbation result.  More precisely, suppose that for some $p_0 \in(1,\infty)$ and for some non-decreasing function $\varphi:[1,\infty) \to [0,\infty)$ such that 
\begin{equation*}
 \lim_{t\to \infty}\frac{\varphi(t)}{t^\frac{1}{p_0-1}}=0
\end{equation*}
we have that, for any $w\in A_{p_0}$,
\begin{equation*}
 \|M\|_{L^{p_0}(w)} \le c\, \varphi([w]_{A_{p_0}}).
\end{equation*}
We will show that this cannot hold. To see this, we argue as in Theorem \ref{thm:AbstractBuckley} and obtain that, for some positive constants $c_1,c_2$ and for $1<p<p_1<p_0$,
\begin{eqnarray*}
\|Mf\|_{L^{p}(\mathbb{R}^n)} &  \le & c_1\,  \varphi([Rf]_{A_{1}}^{p_0-p})
\|f\|_{L^p(\mathbb{R}^n)}\\
& \le & c_1\, \varphi(c_2(p-1)^{-(p_0-1)})\|f\|_{L^p(\mathbb{R}^n)}
\end{eqnarray*}
for any function $f\in L^p(\mathbb{R}^n)$. Since $
 \|M\|_{L^p(\mathbb{R}^n)} \geq c\frac{1}{p-1} $ for $p\to 1$, we obtain that
\begin{equation*}
(p-1)^{-1}  \le  c_1\,  \varphi(c_2(p-1)^{-(p_0-1)})
\end{equation*}
contradicting the assumption on $\varphi$. Therefore, we conclude that there is no possible such an improvement of \eqref{eq:buckley}.  

A similar argument can be used to derive an analogue result for a generic operator $T$ if it is known the precise endpoint behavior of $T$.

\section{Operators with large kernel and commutators}\label{sec:CZ}

Firstly, we address the problem of proving the sharpness of weighted estimates for Calder\'on--Zygmund operators, its commutators with BMO functions and vector valued extensions. We prove here the following corollary of our main result.

\begin{corollary}\label{cor:CZ-Commutators}
Let $T$ be a Calder\'on--Zygmund operator. Denote by $[b,T]$ the commutator with a BMO function $b$.  More generally, its $k$-iteration defined recursively by 
 \begin{equation*}
 T_b^k:=[T_b^{k-1},b],\qquad k\ge 1,
 \end{equation*}
 with  $k$ an integer.  The following weighted estimates are sharp

\begin{equation}\label{eq:CZ}
\|T\|_{L^{ p }(w)} \le c \, [w]^{ \max\{1,   \frac{1}{p-1} \}}_{A_{ p}}, \qquad w \in A_{ p}
\end{equation}

 \begin{equation}\label{eq:commutator}
 \|[b,T]\|_{L^{ p }(w)} \le c \, \|b\|_{BMO}\, [w]^{ 2\max\{1,   \frac{1}{p-1} \}}_{A_{ p}}, \qquad w \in A_{ p}
 \end{equation}

 \begin{equation}\label{eq:k-commutator}
 \|T_b^k\|_{L^{ p }(w)} \le c \,  \|b\|_{BMO}\, [w]^{ (k+1)\max\{1,   \frac{1}{p-1} \}     }_{A_{ p}}, \qquad w \in A_{ p}.
 \end{equation}
 
\end{corollary}

We also have the following application to vector--valued extensions.

\begin{corollary}\label{cor:vector-valued-CZ}
Given a C--Z operator $T$ we define as usual the vector--valued extension $\overline{T}_{q}$ as
\[
\overline{T}_{q}f(x)=
\left(
\sum_{j=1}^{\infty} |Tf_{j}(x)|^{q}
\right)^{1/q}
\]
where ${f}=\{f_j\}_{j=1}^{\infty}$ is a vector--valued function. Then the following estimate is sharp
 \begin{equation}\label{eq:CZ-vector-valued}
 \|\overline T_q(f)\|_{L^{ p }(w)} \le c\,  [w]^{ \max\{1,\frac{1}{p-1} \}}_{A_{p}}
 \left\|\overline{f}_q\right\|_{L^p(w)},  \qquad w \in A_{ p}
 \end{equation}
where $ \overline{f}_q(x)= \left(\sum_{j=1}^{\infty} |f_{j}(x)|^{q}\right)^{1/q}$.

\end{corollary}

\begin{proof}[Proof of Corollary \ref{cor:CZ-Commutators} and Corollary \ref{cor:vector-valued-CZ}]
All the previous inequalities are known to be true (see \cite{Hytonen:A2} for the case of C--Z operators and \cite{CPP} for the case of commutators). The bound in \eqref{eq:CZ-vector-valued} is a very recent result from \cite{HH} (see also \cite{scurry} for an alternative proof).
The sharpness follows immediately from Theorem \ref{thm:AbstractBuckley} if we check the appropriate values of $\alpha_T$ and $\gamma_T$ for each case.

In order to apply our Theorem \ref{thm:AbstractBuckley} here we need to exploit the bad behaviour at the endpoint. We remark here that the upper bound in \eqref{eq:endpointH} holds for any C--Z operator, but we need to focus on those operators $T$ such that the upper bound in \eqref{eq:endpointH} is attained. 
A general condition for this can be found in \cite[p. 42]{ste93}: suppose that the operator kernel $K$ of a C--Z operator $T$ on $\mathbb{R}^n$ satisfies that 
\begin{equation}\label{eq:kernel}
 |K(x,y)|\ge\frac{c}{|x-y|^n}.
\end{equation}
for some $c>0$ and if $x \neq y$. Then $T$ satisfies the same endpoint behaviour as the Hilbert transform in \eqref{eq:endpointH}:
\begin{equation}\label{eq:endpointCZ}
 \|T\|_{L^p(\mathbb{R}^n)}\sim O(\frac{1}{p-1}),
\end{equation}
which clearly implies that $\alpha_T=1$ (we can consider the Hilbert transform $H$ as a model example of this phenomenon in $\mathbb{R}$ and the Riesz transforms for $\mathbb{R}^n$, $n\ge 2$). Therefore, for any of such operators $T$, we have that $\alpha_T=1$. The same kind of arguments shows that $\gamma_T=1$ and then we conclude that \eqref{eq:CZ} is sharp. Since it is clear that the same holds for the vector valued extension, we conclude that \eqref{eq:CZ-vector-valued} is also sharp.

For the case of a commutator of $[b,T]$, if $T$ is a C--Z operator with a kernel $K$ satisfying \eqref{eq:kernel}, we have that $\alpha_{[b,T]}=\gamma_T=2$. Similarly, for the $k$-iterated commutator $T_b^k$, we have that $\alpha_{T_b^k}=\gamma_T=k$. This concludes with the proof of the corollary.
\end{proof}

As a final application of this result for large kernels, we present here the following consequence of our Theorem \ref{thm:AbstractBuckley} for the optimality of weighted estimates of Bochner-Riesz multipliers. For $\lambda>0$ and $R>0$, this operator  is defined by the formula
\begin{equation}\label{eq:def-BR}
 (B^\lambda_R f)(x)=\int_{\mathbb R^n}\left(1-\left(|\xi|/R\right)^2\right)^\lambda_+ \hat f(\xi)
e^{2\pi i \xi x}\ d\xi,
\end{equation}
where $\hat f$ denotes the Fourier transform of $f$. 
For $R=1$ we write simply $B^\lambda$. It is a known fact that this operator has a kernel $K_\lambda(x)$ defined by
\begin{equation}\label{eq:kernel-BR}
K_\lambda(x)= \frac{\Gamma(\lambda+1)}{\pi^\lambda}\frac{J_{n/2+\lambda}(2\pi|x|)}{|x|^{n/2+\lambda}},
\end{equation} 
where $\Gamma$ is the Gamma function and $J_\eta$ is the Bessel function of integral order $\eta$ (see \cite[p.197]{GrafakosCF}).

\begin{corollary}\label{cor:BR}
Let $1<p<\infty$. Suppose further that the following estimate holds
\begin{equation}\label{eq:weighted-BR}
\|B^{(n-1)/2}\|_{L^p(w)} \le c \, [w]^{\beta}_{A_p},
\end{equation}
for any $w\in A_p$ and where the constant $c_p$ is independent of the weight. Then  $\beta\ge \max\left \{1;\frac{1}{p-1}\right \}$.
\end{corollary}
\begin{proof}
The proof is immediate once we check that the size of the Kernel satisfies \eqref{eq:kernel}. To see this, we use the known asymptotics for Bessel functions, namely
\begin{equation*}
 J_\eta(r) = O(r^{-1/2}),
\end{equation*}
(see \cite[p.338, Example 1.4 ]{ste93}). Combining this with \eqref{eq:kernel-BR}, we obtain that 
\begin{equation*}
K_{(n-1)/2}(x)=O(|x|^{-n}),
\end{equation*}
and therefore we have that $\alpha_{B^{(n-1)/2}}=\gamma_{B^{(n-1)/2}}=1$. 
\end{proof}
In particular, this result shows that the claimed norm inequality for the maximal Bochner-Riesz operator from \cite{Li-Sun} cannot hold (see also \cite{Li-Sun-corrigendum}).

\section{Maximal operators and square functions}\label{sec:maximal-and-square}
In this section we will show how to derive sharp bounds for maximal-related operators. We also include a new result for the $k$-iterated Hardy-Littlewood maximal operator. 

\subsection{Iterated maximal operator}\label{sec:iterated-maximal}

Let $k$ be any positive integer, then the $k$-th iteration of the maximal function can be defined by induction as $M^k=M(M^{k-1})$. For this operator, we have the following sharp weighted estimate.
 \begin{corollary} \label{cor:iterated-maximal}
Let $M$ be Hardy-Littlewood maximal function and let  $1<p<\infty$ and $w\in A_p$. Then 
 \begin{equation}\label{eq:k-maximal}
 \|M^k\|_{L^p(w)}\leq c\,[w]_{A_p}^{\frac{k}{p-1}}.
 \end{equation}
 and the exponent is sharp. 
 \end{corollary}
\begin{proof}
The bound follows directly by iterating Buckley's theorem \eqref{eq:buckley} and the sharpness is a consequence of the main result Theorem \ref{thm:AbstractBuckley} since it is not difficult to verify that in this case $\alpha_{M^k}=k$.
For the iterated maximal function we also have that, for large $p$,
\begin{equation*}
\|M^k\|_{L^p(\mathbb{R}^n)}\sim 1.
\end{equation*}
Therefore, we have that $\gamma_{M^k}=0$ and then \eqref{eq:k-maximal} is sharp.
\end{proof}

\subsection{Orlicz-type Maximal functions}\label{sec:orlicz}
In this section we study maximal operators defined in terms of Orlicz norms. This kind of maximal operators allows to consider some sort of intermediate operators between integer iterations of $M$. To be more precise, let us briefly recall some definitions and properties. 
A function $\Phi:[0,\infty) \rightarrow [0,\infty)$ is called a Young function if it is continuous, convex, increasing and satisfies  $\Phi(0)=0$ and $\Phi(t) \rightarrow \infty$ as $t \rightarrow \infty$. The space $L_{\Phi}$ is a Banach function space with the Luxemburg norm defined by
\[
\|f\|_{\Phi} =\inf\left\{ t >0: \int_{\mathbb{R}^n}
\Phi\left( \frac{ |f|}{t }\right) \, dx \le 1 \right\}.
\]

Given a cube $Q$, we can also define a localized Luxemburg norm on a cube Q as
\begin{equation*}
\|f\|_{\Phi,Q}= \inf\left\{t  >0:
\frac{1}{|Q|}\int_{Q} \Phi\left(\frac{ |f|}{ t }\right)  \,
dx \le 1\right\}.
\end{equation*} 
The  corresponding maximal function is 
\begin{equation}\label{eq:maximaltype}
M_{\Phi}f(x)= \sup_{x\in Q} \|f\|_{\Phi,Q}.
\end{equation}

We are interested here in the logarithmic scale given by the functions $\Phi_\lambda(t):=t\log^\lambda(e+t)$, $\lambda \in [0,\infty)$. Note that the case $\lambda=0$ corresponds to $M$. The case $\lambda=k\in \mathbb N$ corresponds to $M_{L(\log L)^{k}}$, which is pointwise comparable to $M^{k+1}$ (see, for example, \cite{perez95:JFA}). 
For noninteger values of $\lambda$, we denote by $M_{\Phi_\lambda}=M_{L(\log L)^\lambda }$ the associated maximal operator. 
By Corollary \ref{cor:iterated-maximal}, we have that the sharp exponent in weighted estimates for these operators is $1/(p-1)$ for $\lambda=0$ and $k/(p-1)$ for $\lambda=k\in\mathbb N$. The following theorem provides a sharp bound for these intermediate exponents in $\mathbb R_{+} \setminus \mathbb N$.

\begin{theorem} \label{thm:Orlicz}
Let $\lambda>0$, $1<p<\infty$ and $w\in A_p$. Then 
 \begin{equation}\label{eq:orlicz}
 \|M_{\Phi_\lambda}\|_{L^p(w)}\leq c\, [w]_{A_p}^{\frac{1}{p}}[\sigma]_{A_\infty}^{\frac{1}{p}+\lambda}   \leq c\, [w]_{A_p}^{\frac{1+\lambda}{p-1}},
 \end{equation}
 where $\sigma=w^{1-p'}$. Furthermore, the exponents are sharp. 
 \end{theorem}
\begin{proof}
We start with the following variant of the classical Fefferman-Stein inequality which holds for any weight $w$. For $t>0$ and any nonnegative function $f$, we have that
\begin{equation}\label{eq:FeffStein-MPhi}
 w\left(\left\{x\in \mathbb{R}^n: M_{\Phi_\lambda}f(x)>t \right\}\right)\le c\int_{\mathbb {R}^n}
\Phi_\lambda\left(\frac{f(x)}{t}\right)\, Mw(x)\ dx,
\end{equation}
where is $M$ is the usual Hardy--Littlewood maximal function and $c$ is a constant independent of the weight $w$. The result 
can be obtained using a Calder\'on--Zygmund decomposition adapted to $M_{\Phi_\lambda}$ as in Lemma 4.1 from \cite{perez95}. We leave the details for the interested reader.

Now, if the weight $w$ is in $A_1$, then inequality \eqref{eq:FeffStein-MPhi} yields the linear dependence on $[w]_{A_1}$,
\begin{equation*}
 w\left(\left\{x\in \mathbb{R}^n:M_{\Phi_\lambda}f(x)>t \right\}\right)\le c\,[w]_{A_1}\int_{\mathbb {R}^n}
\Phi_\lambda\left(\frac{f(x)}{t}\right)\, w(x)\ dx.
\end{equation*}
From this estimate and by using an extrapolation type argument as in \cite[Section 4.1]{perez-lecturenotes}, we derive easily that, for any $w\in A_p$ 
\begin{equation}\label{eq:linearAp-MPhi}
 w\left(\left\{x\in \mathbb{R}^n:M_{\Phi_\lambda}f(x)>t \right\}\right)\le c\,[w]_{A_p}\int_{\mathbb {R}^n}
\Phi_\lambda\left(\frac{f(x)}{t}\right)^p\, w(x)\ dx.
\end{equation}
Now, we follow the same ideas from \cite[Theorem 1.3]{HPR1}. We write the $L^p$ norm as
\begin{equation*}
 \|M_{\Phi_\lambda} f\|_{L^p(w)}^p  \leq  c \int_{0}^{\infty}  t^{p} w \{x\in \mathbb{R}^n:M_{\Phi_\lambda} f_t(x) > t\}
  \frac{dt}{t}
\end{equation*}
where $f_t:=f\chi_{f>t}$. Since $w\in A_p$, then by the precise open property of $A_p$ classes, we have that $w\in A_{p-\varepsilon}$ where $\varepsilon\sim \frac{1}{[\sigma]_{A_\infty}}$. Moreover, the constants satisfy that $[w]_{A_{p-\varepsilon}}\le c[w]_{A_p}$ (see \cite[Theorem 1.2]{HPR1}). We apply \eqref{eq:linearAp-MPhi} with $p-\varepsilon$ instead of $p$ to obtain after a change of variable
\begin{eqnarray*}
\|M_{\Phi_\lambda} f\|_{L^p(w)}^p & \leq & c\, [w]_{A_{p}}\int_{\mathbb{R}^n}  f^p \int_{1}^{\infty} \frac{\Phi_\lambda(t)^{p-\varepsilon}}{t^p}\frac{dt}{t}\ w \ dx \\
& \le & c\, [w]_{A_{p}}\int_1^\infty \frac{(\log(e+ t))^{p\lambda}}{t^\varepsilon}\frac{dt}{t}\ \|f\|^p_{L^p(w)}\\
& \le & c\, [w]_{A_{p}}\left(\frac{1}{\varepsilon}\right)^{\lambda p+1}\ \|f\|^p_{L^p(w)}\\
& \le & c\, [w]_{A_{p}}[\sigma]_{A_\infty}^{\lambda p+1}\ \|f\|^p_{L^p(w)}\\
\end{eqnarray*}
Takin $p$-roots we obtain the desired estimate \eqref{eq:orlicz}.

Regarding the sharpness, we will  prove now that the exponent in the last term of  \eqref{eq:orlicz} cannot be improved. This follows from Theorem \ref{thm:AbstractBuckley} since it is easy to verify that 
\begin{equation*}
 \|M_{\Phi_\lambda}\|_{L^p(\mathbb{R}^n)}\sim \frac{1}{(p-1)^{1+\lambda}}.
\end{equation*}
From this estimate we conclude that the endpoint order verifies $\alpha_T=1+\lambda$ for $T=M_{\Phi_\lambda}$. As a final remark, we mention that the exponents of the middle term in \eqref{eq:orlicz} are also sharp by the same argument as in Section \ref{subsec:mixed}.
\end{proof}

\subsection{Vector valued maximal functions}\label{sec:vector-valued-maximal}
We now consider the vector-valued extension of the H-L maximal function. Let $1<q<\infty$ and $1<p<\infty$, then this operator is defined as:
\begin{equation*}
\overline{M}_qf(x)=\Big( \sum_{j=1}^{\infty} (Mf_j(x))^q \Big)^{1/q},
\end{equation*}
where ${f}=\{f_j\}_{j=1}^{\infty}$ is a vector-valued function. For this operator we obtain this corollary.

\begin{corollary}\label{cor:vector-valued-maximal}
For $1<p<\infty$, and for any $w\in A_p$, the following norm inequality is sharp.
\begin{equation}\label{eq:vector-valued-maximal}
\|\overline{M}_qf \|_{L^p(w)}\le c\, [w]^{\max\{\frac{1}{q},\frac{1}{p-1}\}}\|\overline{f}_q\|_{L^p(w)}, \qquad w \in A_{ p}.
\end{equation}
\end{corollary}

\begin{proof}

The bound was proved in \cite{CMP-ADV}. For the sharpness of \eqref{eq:vector-valued-maximal}, although we cannot apply directly our main Theorem \ref{thm:AbstractBuckley}, it is easy to see that once we write the $L^p$ norm of $\left(\sum_j M(f_j)^q\right)^{1/q}$, the same arguments yield the desired result, namely, the analogue of Theorem \ref{thm:AbstractBuckley} in the vector-valued setting. Therefore, the sharpness will follow if we check the values of $\alpha_{\overline{M}_q}$ and $\gamma_{\overline{M}_q}$. The fact that $\alpha_{\overline{M}_q}=1$ can be verified in the same way as in the case $q=1$. For $\gamma_{\overline{M}_q}$, we can find an example of a vector- valued function satisfying $\|\overline{M}_qf\|_{L^p}\ge c p^{1/q}\|f\|_{L_{\ell^q}^p}$ which implies that $\gamma_{\overline{M}_q}=1/q$. This was already known; see \cite[p.75]{ste93} for the classic proof.
\end{proof}

\subsection{Square functions}\label{sec:square}

We include here the case of the dyadic square function $S_d$, since it behaves similarly to the vector--valued maximal function. It is defined as follows. Let $\Delta$ denote the collection of dyadic cubes in $\mathbb{R}^n$. Given $Q\in\Delta$, let $\hat Q$ be its dyadic parent, that is, the unique dyadic cube containing $Q$ whose side-length is twice that of $Q$. Then, the dyadic square function is the operator 
\begin{equation*}
S_df(x) = \left(\sum_{Q\in\Delta} (f_Q-f_{\hat Q})^2\chi_Q(x)\right)^{1/2} 
\end{equation*}
where $f_Q = \avgint_Qf(x)\ dx$. 

For this operator the result is the following corollary of Theorem \ref{thm:AbstractBuckley}.

\begin{corollary}\label{cor:square-dyadic}
For $1<p<\infty$, and for any $w\in A_p$, the following norm inequality is sharp:
\begin{equation}\label{eq:square-dyadic}
\|S_df\|_{L^p(w)}\le c\,[w]^{\max\{\frac{1}{2},\frac{1}{p-1}\}}\|f\|_{L^p(w)}.
\end{equation} 
\end{corollary}

\begin{proof}
Again, the inequality is known to be true (see \cite{CMP-ADV} and references therein). For the sharpness, we just check the values of the two endpoint orders. We first note that $\alpha_{S_d}=1$ by looking at the indicator function of the unit cube (as in the case of the maximal function). The value of $\gamma_{S_d}=\frac{1}{2}$ was previously known, see for instance \cite[p. 434]{CMP-ADV}. In particular, there is an explicit example of a function $f$ such that 
$\|S_d f\|_{L^p}\ge c p^{1/2}\|f\|_{L^p}$.
It should be mentioned that the case $p\to\infty$ was already implicitly considered in \cite{Fefferman-Pipher}.
\end{proof}

We remark that this type of arguments for the sharpness of weighted estimates were already in the cited article \cite{CMP-ADV} for the square function and the vector-valued maximal function. However, this was used only for these two cases and only for large values of $p$.

\section{Fractional integral operators }\label{sec:fractional}

In the same spirit as in the previous sections, we can prove the sharpness of weighted estimates for fractional integral operators. For $0<\alpha<n$, the fractional integral operator or Riesz
potential  $I_\alpha$ is defined by
\begin{equation*}
I_\alpha f(x)=\int_{R^n} \frac{f(y)}{|x-y|^{n-\alpha}}dy.  
\end{equation*}
We also consider the related fractional maximal operator $M_\alpha$ given by
\begin{equation*}
M_\alpha f(x)=\sup_{Q\ni x} \frac{1}{|Q|^{1-\alpha/n}}\int_Q |f(y)| \ dy. 
\end{equation*}
It is well known (see \cite{MW-fractional}) that these operators are bounded from $L^p(w^p)$ to $L^q(w^q)$ if and only if the exponents $p$ and $q$ are related by the equation $1/q-1/p=\alpha/n$ and $w$ satisfies the so called $A_{p,q}$ condition. More precisely, $w\in A_{p,q}$ if 
 \begin{equation*}
[w]_{A_{p,q}}\equiv \sup_Q\left(\frac{1}{|Q|}\int_Q w^q \ dx\right)\left(\frac{1}{|Q|}\int_Q w^{-p'}\ dx\right)^{q/p'}<\infty.   
 \end{equation*}

An extrapolation theorem  for these classes of weights, often called off-diagonal extrapolation theorem, was obtained for the first time by  Harboure, Mac\'ias and Segovia in \cite{HMS} although we will use 
a new version from \cite{Javi-Duo-JFA}.

We have the following proposition.
\begin{proposition}
Suppose that $0\leq \alpha <n$, $1<p<n/\alpha$ and $q$ is defined by the relationship $1/q=1/p-\alpha/n$.  If $w\in A_{p,q}$,  then the following inequalities are sharp
\begin{equation}\label{eq:frac-maximal}
\|M_\alpha \|_{L^p(w^p) \to L^q(w^q)} \leq c\,
[w]_{A_{p,q}}^{\frac{p'}{q}(1-\frac{\alpha}{n})}. 
\end{equation}
and 
\begin{equation}\label{eq:frac-integral}
\|I_\alpha\|_{L^p(w^p) \to L^q(w^q)}\leq
c\,[w]_{A_{p,q}}^{(1-\frac{\alpha}{n})\max\{1,\frac{p'}{q}\}}. 
\end{equation}
\end{proposition}
\begin{proof}
Both inequalities are known to be true and the proof can be found in \cite{LMPT}. There, it was also proved the sharpness by constructing appropriate examples. We show here that we can derive the sharpness by using a version of our approach adapted to the setting of off diagonal extrapolation. Let $1< p_0<\infty$ and $0<q_0<\infty$ such that $1/p_0-1/q_0=\alpha/n$. Suppose that we have, for some $\beta>0$, the following inequality.
\begin{equation*}
\|M_\alpha \|_{L^{p_0}(w^{p_0}) \to L^{q_0}(w^{q_0})} \leq c\,
[w]_{A_{p_0,q_0}}^\beta,
\end{equation*}
for any $w\in A_{p,q}$. We apply Theorem 5.1 from \cite{Javi-Duo-JFA} to obtain, for any $\frac{n}{n-\alpha}<q<q_0$, the unweighted estimate
\begin{equation}\label{eq:fract-unweighted}
\|M_\alpha f \|_{ L^q(\mathbb{R}^n)} \leq c\, \|M\|_{L^{q\frac{n-\alpha}{n}}(\mathbb{R}^n)}^{\beta(q_0-q)\frac{n-\alpha}{n}}\|f \|_{ L^p(\mathbb{R}^n)} 
\end{equation}
where $M$ is the usual H--L maximal operator. Now we need to use the analogue of the endpoint order for the fractional maximal operator. From \eqref{eq:fract-unweighted} we can derive the following inequality:
\begin{equation}\label{eq:fract-orders}
\left(q-\frac{n}{n-\alpha}\right)^{-1/q} \leq c\, \left(q-\frac{n}{n-\alpha}\right)^{-\beta(q_0-q)\frac{n-\alpha}{n}}. 
\end{equation}
This can be done by estimating the operator norm of the fractional maximal operator. 
On the left hand side of \eqref{eq:fract-orders} we used the fact that 
\begin{equation*}
\|M_\alpha\|^q_{L^p(\mathbb{R}^n)\to L^q(\mathbb{R}^n)}\ge \frac{1}{q-\frac{n}{n-\alpha}} 
\end{equation*}
On the right hand side of \eqref{eq:fract-orders} we just use again that $\|M\|_{L^r}\sim 1/(r-1)$ for $r$ close to 1. Arguing as before, if we let $q$ go to the critical value $\frac{n}{n-\alpha}$ we obtain that
\begin{equation*}
 \beta\ge (1-\alpha/n)\frac{1}{q_0\frac{n-\alpha}{n}-1}=(1-\alpha/n)\frac{p'_0}{q_0}
\end{equation*}

The sharpness for the case of the fractional integral, namely inequality \eqref{eq:frac-integral}, follows essentially the same steps. We need to prove that the inequality
\begin{equation*}
\|I_\alpha\|_{L^p(w^p) \to L^q(w^q)}\leq
c\,[w]_{A_{p,q}}^\beta, \qquad w\in A_{p,q}
\end{equation*}
implies that $\beta\ge (1-\frac{\alpha}{n})\max\{1,\frac{p'}{q}\}$. For the bound $\beta\ge(1-\frac{\alpha}{n})\frac{p'}{q}$ we can repeat the previous proof, since the fractional integral also satisfies that 
\begin{equation*}
\|I_\alpha\|^q_{L^p(\mathbb{R}^n)\to L^q(\mathbb{R}^n)}\ge \frac{1}{q-\frac{n}{n-\alpha}} 
\end{equation*}
The other case, namely $\beta\ge (1-\frac{\alpha}{n})$ follows easily by duality. We left the details for the interested reader.
\end{proof}

\section{Muckenhoupt bases}\label{sec:muckenhoupt-bases}

In this section we address the problem of finding optimal exponents for maximal operators defined over Muckenhoupt bases. Recall that  given a family $\mathcal{B}$ of open sets, we can define the maximal operator $M_\mathcal{B}$ as 
\begin{equation*}
 M_\mathcal{B}f(x)=\sup_{x\in B\in \mathcal{B}}\avgint_B |f(y)| \ dy,
\end{equation*}
if $x$ belongs to some set $b\in \mathcal{B}$ and $M_\mathcal{B}f(x)=0$ otherwise. The natural classes of weights associated to this operator are defined in the same way as the classical Muckenhoupt classes: $w\in A_{p,\mathcal{B}}$ if
\begin{equation*}
[w]_{A_{p,\mathcal{B}}}:=\sup_{B\in\mathcal{B}}\left(\frac{1}{|B|}\int_{B}w(y)\ dy \right)\left(\frac{1}{|B|}\int_{B}w(y)^{1-p'}\ dy \right)^{p-1}<\infty.
\end{equation*}

We say that a basis $\mathcal{B}$ is a Muckenhoupt basis if $M_\mathcal{B}$ is bounded on $L^p(w)$ whenever $w\in A_{p,\mathcal{B}}$ (see \cite{perez-pubmat}).

In this generality, we also can prove a lower bound for the best possible exponent in a weighted estimate. The only requirement on the operator $M_\mathcal{B}$ is that its $L^p$ norm must blow up when $p$ goes to 1 (no matter the ratio of blow up). Precisely, we have the following theorem.

\begin{theorem}\label{thm:muckenhoupt-bases}
Let $\mathcal{B}$ be a Muckenhoupt basis. Suppose in addition that the associated maximal operator $M_\mathcal{B}$ satisfies the following weighted estimate:
\begin{equation}
 \|M_\mathcal{B}\|_{L^{p_0}(w)}\leq c\, [w]_{A_{p_0,\mathcal{B}}}^{\beta}.
\end{equation}
If $\displaystyle\limsup_{p\to 1^+}\|M_\mathcal{B}\|_{L^p(\mathbb{R}^n)}=+\infty$, then $\beta\ge \frac{1}{p_0-1}$.
\end{theorem}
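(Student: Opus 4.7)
The plan is to adapt the strategy of Theorem~\ref{thm:AbstractBuckley} to the setting of a maximal operator on a Muckenhoupt basis, using a Rubio de Francia algorithm built from $M_\mathcal{B}$ itself to derive a self-improving inequality for $N(p):=\|M_\mathcal{B}\|_{L^p(\mathbb{R}^n)}$ with $1<p<p_0$. The hypothesized blow-up as $p\to 1^+$ will then force $\beta\geq \tfrac{1}{p_0-1}$.

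First, for fixed $p\in (1,p_0)$ I would observe that $N=N(p)$ is finite because $\mathcal{B}$ is Muckenhoupt, and then form, for $g\geq 0$ in $L^p(\mathbb{R}^n)$, the iterate
\[
R_\mathcal{B} g := \sum_{k=0}^\infty \frac{(M_\mathcal{B})^k g}{(2N)^k}.
\]
This satisfies $g\leq R_\mathcal{B} g$, $\|R_\mathcal{B} g\|_p\leq 2\|g\|_p$, and $R_\mathcal{B} g\in A_{1,\mathcal{B}}$ with $[R_\mathcal{B} g]_{A_{1,\mathcal{B}}}\leq 2N$. Combined with the duality estimate $[u^{1-p_0}]_{A_{p_0,\mathcal{B}}}\leq [u]_{A_{1,\mathcal{B}}}^{p_0-1}$, this delivers an $A_{p_0,\mathcal{B}}$-weight $W$ (built from $R_\mathcal{B} g$) with
\[
[W]_{A_{p_0,\mathcal{B}}}\leq (2N)^{p_0-1}.
\]

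Next, following the proof of sharp Rubio de Francia extrapolation, adapted to the basis $\mathcal{B}$, I would plug $w=W$ into the hypothesized estimate and use H\"older's inequality to pass from the weighted $L^{p_0}(W)$ norm of $M_\mathcal{B} f$ down to the unweighted $L^p(\mathbb{R}^n)$ norm. The target inequality is
\[
\|M_\mathcal{B} f\|_{L^p(\mathbb{R}^n)}\leq C\, N^{\beta(p_0-1)}\,\|f\|_{L^p(\mathbb{R}^n)},
\]
for every $f\in L^p(\mathbb{R}^n)$, with $C=C(p_0,p,c)$ independent of $N$. Taking the supremum over $\|f\|_p=1$ yields $N\leq C\,N^{\beta(p_0-1)}$. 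If $\beta<\tfrac{1}{p_0-1}$, then $\beta(p_0-1)<1$ and so $N^{1-\beta(p_0-1)}\leq C$, which is a uniform bound on $\|M_\mathcal{B}\|_{L^p(\mathbb{R}^n)}$ as $p\to 1^+$, contradicting the hypothesis and forcing $\beta\geq \tfrac{1}{p_0-1}$.

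The main obstacle is the execution of the H\"older/extrapolation step, since the precise juggling of exponents must produce exactly the power $\beta(p_0-1)$ of $N$ on the right: a weaker exponent $\beta\gamma$ with $\gamma<p_0-1$ would only give $\beta\geq \gamma^{-1}$, not the desired $\tfrac{1}{p_0-1}$. The factor $p_0-1$ comes from the bound $[W]_{A_{p_0,\mathcal{B}}}\leq (2N)^{p_0-1}$, reflecting the cost of passing from $A_{1,\mathcal{B}}$ to $A_{p_0,\mathcal{B}}$, and it is this that makes $\beta=\tfrac{1}{p_0-1}$ the critical value.
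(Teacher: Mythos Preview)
Your approach is correct and essentially the same as the paper's: run the Rubio de Francia algorithm with $M_\mathcal{B}$ itself, insert the weighted hypothesis, and obtain a self-referential inequality for $N=\|M_\mathcal{B}\|_{L^p}$ whose exponent on the right forces $\beta\ge\frac{1}{p_0-1}$ once $N$ blows up. Two small points where your sketch is slightly imprecise compared with the paper's execution: first, the weight one actually uses is $W=(R_\mathcal{B} f)^{-(p_0-p)}$, and after the duality identity $[W]_{A_{p_0,\mathcal{B}}}=[W^{1-p_0'}]_{A_{p_0',\mathcal{B}}}^{p_0-1}$ one applies Jensen (the exponent $\tfrac{p_0-p}{p_0-1}<1$) to get $[W]_{A_{p_0,\mathcal{B}}}\le (2N)^{p_0-p}$, not $(2N)^{p_0-1}$; the inequality you want, $N\le C\,N^{\beta(p_0-1)}$, then follows from the intermediate $N\le C\,N^{\beta(p_0-p)}$ together with $N\ge 1$. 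Second, be sure to note that the constant $C$ is independent of $p$ (it involves only $c$ from the hypothesis and powers of $2$), since otherwise a $p$-dependent $C$ blowing up as $p\to 1^+$ would spoil the contradiction.
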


\begin{proof}
The idea is to perform the iteration technique from Theorem \ref{thm:AbstractBuckley} but with $M_\mathcal{B}$ instead of the standard H--L maximal operator. Then we obtain, for $ 1<p<p_0$, that
\begin{equation} \label{eq:CF-muckenhoupt-bases}
\|M_\mathcal{B}\|_{L^{p}(\mathbb{R}^n) } \leq c\, \|M_\mathcal{B}\|_{L^{p}(\mathbb{R}^n) }   ^{\beta(p_0-p)}\le c\, \|M_\mathcal{B}\|_{L^{p}(\mathbb{R}^n) }   ^{\beta(p_0-1)}.
\end{equation}
The last inequality holds since $\|M_\mathcal{B}\|_{L^{p}(\mathbb{R}^n) }\ge 1$. 
We remark here that, since we are comparing $M_\mathcal{B}$ to itself, it is irrelevant to know the precise quantitative behaviour of its $L^p$ for $p$ close to 1. In fact, we cannot use any estimate like \eqref{eq:maximal-pto1} since we are dealing with a generic basis. Just knowing that the $L^p$ norm blows up when $p$ goes to 1, allows us to conclude that $\beta\ge \frac{1}{p_0-1}$. 
\end{proof}
As an example of this result, we can show that the result for Calder\'on weights from \cite{DMRO-calderon} is sharp. Precisely, for the basis $\mathcal{B}_0$ of open sets in $\mathbb{R}$ of the form $(0,b)$, $b>0$, the authors prove that the associated maximal operator $N$ defined as
\begin{equation*}
 Nf(t)=\sup_{b>t}\frac{1}{b}\int_0^b |f(x)|\ dx
\end{equation*}
is bounded on $L^p(w)$ if and only if $w\in A_{p,\mathcal{B}_0}$ and, moreover, that 
\begin{equation*}
 \|N\|_{L^p(w)}\le c\, [w]_{A_{p,\mathcal{B}_0}}^{\frac{1}{p-1}}.
\end{equation*}
By the preceding result, this inequality is sharp with respect to the exponent on the characteristic of the weight.

We can also apply Theorem  \ref{thm:muckenhoupt-bases} to the basis of rectangles in $\mathbb R^n$ with sides parallel to the coordinate axes. We detail this case in the following subsection.

\subsection{The strong maximal function}\label{sec:strong}

All the sharp results we have obtained here concern the classical or one--parameter theory, where the operators commute with one-parameter dilations of $\mathbb{R}^n$. A natural question would be to study this kind of sharp quantitative estimates for {\em multi--parameter} operators. As a first step we have tried to apply the approach we have presented here to the most basic example of the multiparameter theory, that is the strong maximal function. However, we have not obtained a satisfactory answer even for this. 

Let us recall first some definitions and known estimates to understand why our {\emph{template}} does not work for this operator. For a locally integrable function $f$ on $\mathbb{R}^n$ we will denote by $M_s f$ the strong maximal function:
\begin{align*}
 M_sf(x)= \sup_{\substack{R\ni x}} \frac{1}{|R|} \int_R |f(y)|dy,\quad x\in\mathbb R^n,
\end{align*}
where the supremum is taken over all the rectangles in $\mathbb R^n$ with sides parallel to the coordinate axes. This operator is bounded in $L^p(\mathbb{R}^n)$. Indeed,
\begin{equation}\label{eq:strong R}
\| M_s \|_p \approx  
(p')^n
\end{equation}
where $1<p<\infty$. We will say that $w$ belongs to the class $A_p ^*$, $1<p<\infty$, whenever
\begin{align*}
 [w]_{A_p ^*}=\sup_{R} \bigg(\frac{1}{|R|}\int_R w \bigg)  \bigg( \frac{1}{|R|}\int_R w^{1-p'} \bigg)^{p-1}<+\infty
\end{align*}
where the supremum is taken over all the rectangles in $\mathbb R^n$ with sides parallel to the coordinate axes. Thus $A_p^*$ is the class of weights associated naturally  with $n$--dimensional intervals. As it happened with the Hardy-Littlewood maximal function, this class of weights characterizes completely the boundedness of the strong maximal function in weighted Lebesgue spaces. In fact, it is not difficult to see that
\begin{equation}\label{eq:strong}
\|M_s\|_{L^{ p }(w)} \le c \, [w]^{   \frac{n}{p-1}  }_{A_{ p}^*}, \qquad w \in A_{ p}^*.
\end{equation}
To study which would be the sharp exponent in the last inequality, we could reproduce the proof of Theorem \ref{thm:AbstractBuckley} replacing in the Rubio de Francia's algorithm the maximal function by the strong maximal one and making a suitable use of estimate \eqref{eq:strong R}. The analogue for the multiparameter setting is the following result. Suppose that a given operator $T$, bounded in $L^p(\mathbb{R}^n)$ for $1<p<\infty$, satisfies a weighted inequality like 
\begin{equation*}\label{eq:strong-weighted}
 \|T\|_{L^p(w)}\le c\, [w]_{A^*_p}^\beta
\end{equation*}
for any $w\in A_p^*$. In addition, define the endpoint order $\alpha_T$ as before. Then, the same arguments from Theorem \ref{thm:AbstractBuckley} allow us to conclude that 
any exponent in \eqref{eq:strong-weighted} needs to be
\begin{equation*}
\beta\geq\frac{\alpha_{T}}{n(p-1)}.
\end{equation*}
Going back to the case of the strong maximal function, we have that $\alpha_{M_s}=n$ according to Definition \ref{def:orders} and the estimate \eqref{eq:strong R}. Therefore we just obtain a trivial estimate.

\section{Acknowledgements}
We are deeply in debt to Javier Duoandikoetxea for many valuable comments and suggestions on this problem. In particular, he pointed out and brought to our attention the application of our results to  Bochner-Riesz multipliers and Muckenhoupt bases.

The first author is supported by the Spanish Ministry of Science and Innovation grant MTM2012-30748,
the second and third authors are also supported by the Junta de Andaluc\'ia, grant FQM-4745.

\bibliographystyle{alpha}

\end{document}